\newcommand\KK{{\mathbb K}}
\newcommand\FF{{\mathbb F}}
\newcommand\QQ{{\mathbb Q}}
\newcommand\cP{\mathcal{P}}
\newcommand\cC{{\mathcal C}}
\newcommand\cB{{\mathcal B}}
\newcommand\smallSetOf[2]{\{{#1}\,|\,{#2}\}}
\theoremstyle{plain}
    \newtheorem{theorem}{Theorem}
    \newtheorem{lemma}[theorem]{Lemma}
    \newtheorem{proposition}[theorem]{Proposition}
\theoremstyle{definition}
    \newtheorem{example}[theorem]{Example}
    \newtheorem{definition}[theorem]{Definition}
    \newtheorem{question}{Question}
\title{The correlation constant of a field}
\author{Benjamin Schr\"oter}
\address[Benjamin Schr\"oter]{
  Institut f{\"u}r Mathematik,
  TU Berlin,
  Str.\ des 17. Juni 136, 10623 Berlin, Germany
}
\email{schroeter@math.tu-berlin.de}
\subjclass[2010]{05B35, 51D20, (05B25, 51E20)}
\keywords{matroids, correlation constant, field invariants}
\begin{document}

\begin{abstract}
	We study the correlation of edges, vectors or elements to be in a randomly chosen spanning tree or a basis, respectively.
	Here we follow the guideline of Huh and Wang and introduce as a measure an invariant that is called the correlation constant of a graph, vector configuration, matroid or field. It follows from one of their results that these correlation constants are numbers between $0$ and $2$.
	Here, we show that the correlation constant of every field is at least $\frac{8}{7}$.
	In our proof we explicitly construct vector configurations and matroids with positively correlated elements.
\end{abstract}
\maketitle

\section{Introduction}
\noindent
This article deals with a basic question which appears in both the theory of graphs and finite geometries and ask how strongly independence of edges in a graph or vectors in a configuration is correlated.

First we consider graphs and their edges. Let $G$ be a finite connected graph, $i$, $j$ two edges of $G$ and consider a uniform distribution on the spanning trees of $G$. We denote by $\Pr(i\in T)$ the probability that the edge $i$ is in a randomly chosen tree $T$. 
\begin{question}\label{Q:graphs}
What is the correlation between the probabilities $\Pr(i\in T)$ and $\Pr(j\in T)$ for distinguishable edges $i$ and $j$?
\end{question}
From Kirchhoff's law in an electrical network Brooks, Smith, Stone and Tutte derive an equation \cite[Equation (2.34)]{BrooksSmithStoneTutte:1940} which implies that the above events are negatively correlated, i.e.,
the covariance $\Pr(i,j\in T) - \Pr(i\in T)\cdot \Pr(j\in T)$ is negative. This plays a central role in Tutte's characterization of graphs with a constant number of spanning trees through any two edges; see \cite{Tutte:1974}.

Now we take a look at vectors. 
Given a field $\KK$, let $\cP$ be a vector configuration in a $\KK$-vector space with a uniform distribution on the basis formed by vectors of $\cP$.
The central number of this article is the \emph{correlation constant} $\beta(\cP) = \max_{v,w\in\cP}\frac{\Pr(v,w\in B)}{\Pr(v\in B)\cdot\Pr(w\in B)}$ of the configuration $\cP$, where $B$ is a randomly chosen basis.
Huh and Wang \cite{HuhWang:2017} asked for the following.
\begin{question}\label{Q:HuhWang}
	How large can the correlation constant (for a given field) be?
\end{question}
The \emph{correlation constant} $\beta_\KK$ of a field is the supremum of all correlation constants taken over all vector configurations.
The aim of this article is to give an explicit lower bound on this correlation constant.

The common language of graphs and finite vector configurations is matroid theory. The monographs of Oxley \cite{Oxley:2011} and White \cite{White:1986} serve as the foundation for this article.
A \emph{matroid} $M$ is a non-empty collection $\cB$ of subsets of a finite set $E$ with the property that for every pair $B,B'\in\cB$ and any element $e\in B\setminus B'$ an element $e'\in B'\setminus B$ exists such that $B\setminus\{e\}\cup\{e'\}\in\cB$.
The set $E$ is called the \emph{ground set}, the sets in $\cB$ are the \emph{bases} of the matroid $M$ and a \emph{loop} is an element that does not occur in any basis.
In the following we will assume that the ground set $E$ is $[n]=\{1,2,\ldots,n\}$.
The questions above lead to the following definition.

\begin{definition}\label{def:beta}
Let both $i$ and $j$ be elements of $M$ that are not loops. Then we define
\[
	\beta(M;i,j) \coloneqq \frac{b \cdot b_{ij}}{b_i\cdot b_j} \enspace .
\]
	Where $b$, $b_i$, $b_j$ and $b_{ij}$ are the numbers of bases of $M$, the number of bases containing $i$, $j$ or $i,j$, respectively.
Assume that $M$ has at least two non-loops, then the \emph{correlation constant} of $M$ is the number $\beta(M)\coloneqq \max_{i,j}\beta(M;i,j)$, where the maximum ranges over all non loops $i$, $j$ of $M$.
\end{definition}

Question~\ref{Q:graphs} asks about the correlation constant of graphical matroids, while Question~\ref{Q:HuhWang} is about the correlation constant of $\KK$-representable matroids.

Seymour and Welsh \cite[Conjecture 4]{SeymourWelsh:1975} conjectured that the correlation constant of any matroid is bounded by one, i.e., the elements of a matroid are negatively correlated.
About fifty years ago Rota conjectured that the coefficients of the characteristic polynomial of a matroid are log-concave, the coefficients $a_k$ satisfy $a^2_k \geq a_{k−1}\cdot a_{k+1}$. 
Seymour and Welsh  \cite{SeymourWelsh:1975} claimed that their conjecture implies log-concavity of the coefficients.
However, they found a rank four binary matroid on eight elements with correlation constant $\frac{48\cdot 12}{20 \cdot28}=\frac{36}{35}\approx 1.02$; cf.~Example~\ref{ex:SeymourWelsh} below. This counterexample has been originally published as a note added in proof \cite{SeymourWelsh:1975}.

Recently, Adiprasito, Huh and Katz \cite{AdiprasitoHuhKatz:2015} proved Rota's conjecture as a key they introduced a version of the Hodge-Riemann bilinear relations for the Chow ring of a matroid. Surveys to their remarkable techniques are \cite{AdiprasitoHuhKatz:2017} and \cite{Baker:2018}.
Huh and Wang \cite{HuhWang:2017} developed a variant of their combinatorial approach using a Hodge-Riemann form over a M\"obius algebra which is generated by the variables associated with the elements in the ground set of a matroid.
A bilinear pairing between two of these variables is given by the numbers $b_{ij}$ from above if $i\neq j$ and $0$ otherwise. 
They claim in \cite{HuhWang:2017} that the Hodge-Riemann form of a simple matroid has exactly one positive eigenvalue and that a proof will be part of \cite{HuhWang}.
Cauchy's interlacing for symmetric matrices shows that this property is preserved when restricting to a well chosen $3$-dimensional subspace. The obtained form has a positive determinant, which is $2 (r-1)^2 \cdot b_i b_j b_{ij}- r(r-1)\cdot b b_{ij}^2$. Hence the correlation constant of every matroid is bounded by two.

As mentioned before, the aim of this article is the following new lower bound on the correlation constant $\beta_\KK$ of a field, i.e., the suprema of correlation constants for $\KK$-representable matroids.
\begin{theorem}
	The correlation constant $\beta_\KK$ of the field $\KK$ satisfies $\frac{8}{7}\leq \beta_\KK \leq 2$.
\end{theorem}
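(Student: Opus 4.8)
The upper bound $\beta_\KK \le 2$ is already established in the excerpt via the Huh–Wang Hodge–Riemann form and Cauchy interlacing, so the entire burden is to exhibit, for every field $\KK$, a $\KK$-representable matroid $M$ with two non-loops $i,j$ satisfying $\beta(M;i,j)\ge \frac{8}{7}$. The plan is therefore constructive: first produce a single small matroid that is \emph{regular} (representable over every field) and already has correlation constant close to or equal to $\frac{8}{7}$, and then, if a single such matroid only achieves the bound in a limit, take an explicit infinite family $M_k$ of regular matroids with $\beta(M_k)\to\frac{8}{7}$ from below replaced by $\ge\frac{8}{7}$, so that the supremum defining $\beta_\KK$ picks up the value. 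Since regular matroids are $\KK$-representable for all $\KK$, the resulting lower bound is uniform in the field, which is exactly what the theorem asks for.

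The key steps I would carry out are: \textbf{(1)} Recall the Seymour–Welsh example (the rank-$4$ binary matroid on $8$ elements, $\beta=\frac{36}{35}$) as a template, and look for a modification—adding parallel elements, coextending, or taking a direct-sum/parallel-connection with uniform matroids—that pushes the ratio $\frac{b\,b_{ij}}{b_i b_j}$ upward while staying regular. \textbf{(2)} Identify a clean base case, e.g. a small regular matroid $M_0$ for which a direct computation of $b,b_i,b_j,b_{ij}$ gives exactly $\beta(M_0;i,j)=\frac{8}{7}$, or a two-parameter construction $M(p,q)$ built from parallel connections of circuits and cocircuits whose base counts are rational functions of $p,q$ that one can optimize. \textbf{(3)} Prove the base-count formulas for the family by the standard deletion–contraction recursion $b(M)=b(M\setminus e)+b(M/e)$, tracking the four quantities simultaneously; this reduces everything to a finite inductive identity. \textbf{(4)} Maximize the resulting explicit rational expression over the parameters and verify the optimum (or its supremum) equals $\frac{8}{7}$, and check along the way that the chosen elements $i,j$ are genuinely non-loops so $\beta(M)\ge\beta(M;i,j)$ is legitimate.

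The main obstacle I expect is step \textbf{(1)}–\textbf{(2)}: finding the right construction. The inequality $\beta\ge\frac{8}{7}\approx 1.143$ is substantially stronger than the $\frac{36}{35}\approx 1.029$ of the known binary example, so a naive tweak will not suffice; one needs a configuration where a pair of elements is forced into bases together far more often than their marginal frequencies would predict. I would search among vector configurations with a large "clustered" structure—many vectors lying in or near a common low-dimensional flat through both $i$ and $j$—since concentrating the combinatorics of the basis exchange around $i$ and $j$ is the natural way to inflate $b_{ij}$ relative to $b_i b_j/b$. A likely candidate is an iterated parallel connection of copies of a small fixed gadget at the elements $i$ and $j$, for which the base-generating function factors multiplicatively and the limit ratio is easy to read off; verifying regularity (e.g. via Tutte's excluded-minor characterization, absence of $U_{2,4}$ and the Fano/non-Fano minors, or by exhibiting a totally unimodular representation directly) is then the remaining routine check.

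Once the family and its base counts are in hand, the final assembly is short: choose $k$ large enough (or pass to the supremum) so that $\beta(M_k)\ge\frac{8}{7}$, note $M_k$ is regular hence $\KK$-representable for every field $\KK$, and conclude $\beta_\KK=\sup_{M\ \KK\text{-repr.}}\beta(M)\ge\frac{8}{7}$, which combined with the already-proved upper bound $\beta_\KK\le 2$ yields $\frac{8}{7}\le\beta_\KK\le 2$.
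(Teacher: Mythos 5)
Your plan for the lower bound has a genuine gap --- in fact two. First, the construction that would carry the whole proof is never produced: your steps (1)--(2) are a search strategy rather than an argument, and you yourself flag finding a configuration with $\beta\geq\frac{8}{7}$ as the main obstacle; without that object nothing is proved. Second, and more decisively, the search space you propose is empty. You want a regular (totally unimodular) matroid, or a family of such closed under parallel connection, with positively correlated elements; but regular matroids are balanced in the sense of Feder and Mihail, hence every pair of elements is negatively correlated and $\beta(M)\leq 1$ for every regular $M$. No regularity-preserving modification of the Seymour--Welsh example can exceed $1$, let alone reach $\frac{8}{7}$ (that example is binary but not regular, which is precisely why it can have $\beta=\frac{36}{35}$). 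So a field-uniform gadget of the kind you envisage cannot exist, and the lower bound has to come from constructions that depend on the field.

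This is what the paper does. For characteristic $p$ it takes the configuration $e_1$, $v=\sum_{\ell=2}^r e_\ell$ and $v_{k,\ell}=k\cdot e_1+e_\ell$ in $\FF_p^r$, computes the four counts $b^{ij}$, $b_i^j$, $b_j^i$, $b_{ij}$ directly (no deletion--contraction recursion is needed), and obtains $\alpha(M_{r,p})=\frac{(r-1)^2}{2+(r-1)(r-2)}$, which is maximal, equal to $\frac{8}{7}$, at $r=5$; a parallel computation of the same configuration over $\QQ$ handles characteristic $0$, where the ratio tends to the same value as $p\to\infty$. The value $\frac{8}{7}$ is then attained not by a single matroid with $\beta\geq\frac{8}{7}$ but as a supremum: Lemma~\ref{lem:convergence} shows that replacing every element other than $i,j$ by $k$ parallel copies makes $\beta(M_k;i,j)$ increase monotonically to $\alpha(M;i,j)$, whence $\beta_\KK\geq\sup_k\beta(M_k;i,j)=\frac{8}{7}$, and combining with the Huh--Wang bound gives the theorem. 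Your proposal is missing both the explicit characteristic-dependent configuration and the $\alpha$-ratio/parallel-extension mechanism that converts it into the stated bound.
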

This statement is a central part of Theorem~\ref{thm:final}. As a tool we introduce another invariant $\alpha(M)$ of a matroid $M$; cf.~Definition~\ref{def:alpha}. In Theorem~\ref{thm:relation} we show that this $\alpha$-ratio is an upper bound for the correlation constant of a matroid with positive correlation.

We apply Theorem~\ref{thm:relation} to the large class of sparse paving matroids whose $\alpha$-ratio is bounded by one.
Hence we conclude that elements in these matroids are negatively correlated.
It is conjectured by Mayhew, Newman, Welsh and Whittle in \cite[Conjecture 1.6]{MayhewNewmanWelshWhittle:2011} that almost all matroids are sparse paving. This conjecture has its origin in a question of Welsh stated in \cite{Welsh:1971}.

We continue by constructing a sequence of matroids $M_k$ that we derive from a matroid $M$, such that the sequence of correlation constants $\beta(M_k)$ converges monotonically to $\alpha(M)$; see Lemma~\ref{lem:convergence}.
This completes our comparison of the $\alpha$-ratio and the correlation constant of a matroid.

In the last section, we present new examples of matroids with positive correlation and apply Theorem~\ref{thm:relation} and Lemma~\ref{lem:convergence} from the previous section.
In particular, we deduce Theorem~\ref{thm:final} and that the correlation constant of any field is at least $\frac{8}{7}\approx 1.14$.

\section{The $\alpha$-ratio and the correlation constant of a matroid}
\noindent
For our next steps we introduce further notation.
We denote by $b^{j}_{i}=b_i-b_{ij}$ the number of bases of $M$ containing $i$ and not $j$. Similar we define $b^{i}_{j}=b_j-b_{ij}$ and $b^{ij}=b-b_i-b^i_j$. Now we are able to give a definition of a second ratio. 
\begin{definition}\label{def:alpha}
	Let $i,j$ neither be loops, coloops nor parallel elements in $M$, then we define
	\[
		\alpha(M;i,j) \coloneqq \frac{  b^{ij}\cdot b_{ij}  }{  b_i^j \cdot b_j^i } \enspace .
	\]
	Let $M$ be a matroid that has at least a valid pair of elements, then the \emph{$\alpha$-ratio} $\alpha(M)$ of $M$ is the maximum $\max_{i,j} \alpha(M;i,j)$ over all valid pairs of elements $i,j$.
\end{definition}

Note that the numbers occurring in Definition~\ref{def:beta} and Definition~\ref{def:alpha} are the numbers of bases in deletions and contractions of the two elements $i$ and $j$.

As a first example for these definitions let us take a look at uniform matroids.
A matroid  whose collection of bases is formed by all $r$-sets of $[n]$ is called the \emph{uniform matroid} $U_{r,n}$. Clearly the number of its bases is given by the binomial coefficient $\tbinom{n}{r}$. The class of uniform matroids is minor closed, i.e., deletions and contractions are again uniform matroids.

\begin{example} 
	Let $1 < r < n$. We get for every pair $i, j$ in the uniform matroid $U_{r,n}$ 
	\[
		\alpha(U_{r,n},i,j)
		= \frac{(r-1)\cdot(n-r-1)}{r\cdot(n-r)}
		\ \text{ and } \
		\beta(U_{r,n},i,j) 
		= \frac{n \cdot (r-1)}{r \cdot(n-1)}
	\]
	and hence we have $0 \ < \ \alpha(U_{r,n}) \ < \ \beta(U_{r,n}) \ < \ 1 $.
\end{example}

We do not define an $\alpha$-ratio for the uniform matroids $U_{0,n}$, $U_{1,n}$ and $U_{n,n}$.
In general, each pair of elements in a matroid is either parallel, contains a loop or a coloop if and only if the matroid is a direct sum of the form $U_{1,n_1}\oplus U_{0,n_2}\oplus U_{n_3,n_3}$.

We start now with analyzing the relation between the $\alpha$-ratio and the correlation constant of a matroid.

\begin{proposition} Let $M = M_1\oplus M_2$ be a disconnected matroid and $i,j,\ell$ are neither loops, coloops nor pairwise parallel. If $i$ and $j$ are in the same connected component $M_1$, then we have
	\[
		\alpha(M;i,j) = \alpha(M_1;i,j) \, \text{ and } \, 
		\beta(M;i,j)  = \beta(M_1;i,j) \enspace .
	\]
	Further, if $i$ and $\ell$ are disconnected in $M$ then we have the independency
	\[
		\alpha(M;i,\ell) = \beta(M;i,\ell) = 1 \enspace .
	\]
\end{proposition}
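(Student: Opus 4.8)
The plan is to reduce the whole statement to the elementary multiplicativity of base counts under direct sums. Recall that $\cB(M_1\oplus M_2)=\{B_1\cup B_2 : B_1\in\cB(M_1),\ B_2\in\cB(M_2)\}$, so $b=b(M_1)\,b(M_2)$, and more generally any count of bases subject to conditions that only concern elements of one summand factors as the corresponding count in that summand times the total base number of the other summand. I would also note at the outset that being a loop, a coloop, or a parallel element is a property internal to a connected component; consequently the hypotheses on $i,j,\ell$ carry over to $M_1$ (and to $M_2$ for $\ell$), which is precisely what makes $\alpha(M_1;i,j)$, $\beta(M_1;i,j)$ legitimate expressions and guarantees the non-vanishing of the denominators appearing below.

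For the first assertion, with $i,j$ both in the component $M_1$, I would write $b=b(M_1)b(M_2)$, $b_i=b_i(M_1)b(M_2)$, $b_j=b_j(M_1)b(M_2)$, $b_{ij}=b_{ij}(M_1)b(M_2)$, and likewise $b^{j}_{i}=b^{j}_{i}(M_1)b(M_2)$, $b^{i}_{j}=b^{i}_{j}(M_1)b(M_2)$, $b^{ij}=b^{ij}(M_1)b(M_2)$. Substituting these into Definitions~\ref{def:beta} and~\ref{def:alpha}, both ratios carry two factors of $b(M_2)$ in the numerator and two in the denominator; after cancellation they are exactly $\beta(M_1;i,j)$ and $\alpha(M_1;i,j)$.

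For the second assertion, with $i\in M_1$ and $\ell\in M_2$, the relevant counts are $b=b(M_1)b(M_2)$, $b_i=b_i(M_1)b(M_2)$, $b_\ell=b(M_1)b_\ell(M_2)$ and $b_{i\ell}=b_i(M_1)b_\ell(M_2)$, which already gives $\beta(M;i,\ell)=1$. For the $\alpha$-ratio I would further compute $b^{\ell}_{i}=b_i(M_1)\bigl(b(M_2)-b_\ell(M_2)\bigr)$, $b^{i}_{\ell}=\bigl(b(M_1)-b_i(M_1)\bigr)b_\ell(M_2)$ and $b^{i\ell}=\bigl(b(M_1)-b_i(M_1)\bigr)\bigl(b(M_2)-b_\ell(M_2)\bigr)$; then $b^{i\ell}\cdot b_{i\ell}$ and $b^{\ell}_{i}\cdot b^{i}_{\ell}$ are manifestly the same product, so $\alpha(M;i,\ell)=1$. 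Here $i$ being neither a loop nor a coloop of $M_1$ gives $0<b_i(M_1)<b(M_1)$, and similarly $0<b_\ell(M_2)<b(M_2)$, so no denominator is zero.

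There is no real difficulty in this argument: it is pure bookkeeping, and the only point that genuinely requires attention is well-definedness --- checking that the hypotheses descend to the summands and that the four denominators $b^{j}_{i}$, $b^{i}_{j}$, $b^{\ell}_{i}$, $b^{i}_{\ell}$ are non-zero --- which is why I would settle it explicitly before running the computations.
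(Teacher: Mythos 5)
Your argument is correct and is essentially the paper's own proof: both rely on the multiplicativity of base counts under direct sums, factor each of the relevant quantities $b$, $b_i$, $b_{ij}$, $b^j_i$, $b^{ij}$, $b_{i\ell}$, $b^\ell_i$, $b^{i\ell}$ accordingly, and substitute into the definitions of $\alpha$ and $\beta$. Your explicit check that the hypotheses descend to the summands and that the denominators are non-zero is a small welcome addition the paper leaves implicit, but it does not change the route.
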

\begin{proof}
	The number $b(M)$ of bases of a disconnected matroid $M=M_1\oplus M_2$ decomposes into the product $b(M_1)\cdot b(M_2)$.

	For elements $i,j$ in $M_1$ and $\ell$ $M_2$ we obtain the factorizations
	\begin{align*}
		b_i(M)    &= b_i(M_1)\cdot b(M_2),     & b_{ij}(M) &= b_{i,j}(M_1)\cdot b(M_2), & b^j_i(M) &= b^j_i(M_1)\cdot b(M_2),\\
		b^{ij}(M) &= b^{i,j}(M_1)\cdot b(M_2), & b_\ell(M) &= b(M_1)\cdot b_\ell(M_2),  & b_{i\ell}(M) &= b_i(M_1)\cdot b_\ell(M_2),\\
		b^\ell_i(M) &= b_i(M_1)\cdot b^\ell(M_2), & b^{i\ell}(M) &= b^i(M_1)\cdot b^\ell(M_2) \enspace.
	\end{align*}
	Substitution into the definitions provides the desired equations.
\end{proof}

Now let us proceed and include connected matroids in our considerations.
\begin{theorem}\label{thm:relation}
	Let $M$ be a matroid, $i$ and $j$ be neither loops nor parallel. Then one of the following four conditions holds
	\begin{align*}
		\alpha(M;i,j) >& \beta(M;i,j) > 1 \qquad\text{ or } & 
		1 = \alpha(M;i,j) =& \beta(M;i,j) \qquad \text{ or } \\
		0 < \alpha(M;i,j) <& \beta(M;i,j) < 1  \qquad\text{ or } &
		0 = \alpha(M;i,j) =& \beta(M;i,j) \enspace .
	\end{align*}
	In particular, for a  matroid with positive correlated elements the $\alpha$-ratio is an upper bound for its correlation constant.
\end{theorem}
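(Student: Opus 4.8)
The plan is to compare the two ratios after subtracting $1$ from each and to exploit that the resulting numerators are identical. First I would split the bases of $M$ into four classes according to whether a basis contains $i$, contains $j$, both, or neither; their sizes are $b_{ij}$, $b^j_i$, $b^i_j$, $b^{ij}$ and satisfy $b=b_{ij}+b^j_i+b^i_j+b^{ij}$, $b_i=b_{ij}+b^j_i$, $b_j=b_{ij}+b^i_j$. Since $i$ and $j$ are neither loops nor parallel, $\{i,j\}$ is independent, hence lies in a basis, so $b_{ij}>0$; this single positivity fact drives the whole argument. I would also dispose of the degenerate pairs first: if $b^j_i=0$ then every basis through $i$ contains $j$, and were $j$ not a coloop one could extend $B_0\setminus\{j\}$ — for $B_0$ a basis containing both $i$ and $j$ — to a basis still containing $i$ but avoiding $j$, a contradiction; so $j$ is a coloop, and a direct substitution then gives $\beta(M;i,j)=1$. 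Symmetrically if $b^i_j=0$. Hence for the main analysis I may assume $b^j_i,b^i_j>0$, so that $\alpha(M;i,j)$ is genuinely defined.

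The heart of the matter is the one-line expansion
\[
  b\cdot b_{ij}-b_i b_j \;=\; b^{ij} b_{ij}-b^j_i b^i_j ,
\]
whose common value I denote by $N$. It yields
\[
  \beta(M;i,j)-1=\frac{N}{b_i b_j}\,,\qquad \alpha(M;i,j)-1=\frac{N}{b^j_i\, b^i_j}\,,
\]
so $\beta-1$ and $\alpha-1$ are the \emph{same} number $N$ divided by two positive quantities. Moreover
\[
  b_i b_j-b^j_i b^i_j=(b_{ij}+b^j_i)(b_{ij}+b^i_j)-b^j_i b^i_j=b_{ij}\bigl(b_{ij}+b^i_j+b^j_i\bigr)>0 ,
\]
again because $b_{ij}>0$, hence $0<b^j_i b^i_j<b_i b_j$. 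Consequently $\alpha-1$ and $\beta-1$ share their sign, and dividing the fixed quantity $N$ by the strictly smaller denominator moves it strictly farther from $0$; in particular $|\alpha(M;i,j)-1|>|\beta(M;i,j)-1|$ whenever $N\neq 0$.

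It remains to read off the four alternatives from the sign of $N$. If $N>0$ then $\alpha-1>\beta-1>0$, i.e.\ $\alpha(M;i,j)>\beta(M;i,j)>1$; if $N=0$ then $\alpha(M;i,j)=\beta(M;i,j)=1$; if $N<0$ then $\alpha-1<\beta-1<0$, i.e.\ $\alpha(M;i,j)<\beta(M;i,j)<1$, and in this range $\alpha(M;i,j)=b^{ij}b_{ij}/(b^j_i b^i_j)$ is strictly positive precisely when $b^{ij}>0$, the endpoint $b^{ij}=0$ (i.e.\ $\{i,j\}$ a cocircuit) giving $\alpha(M;i,j)=0$; finally the fourth alternative $\alpha(M;i,j)=\beta(M;i,j)=0$ is the borderline case $b_{ij}=0$ of a parallel pair. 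For the ``in particular'' statement: $\beta(M;i,j)>1$ excludes $i$ and $j$ being coloops, so $b^j_i,b^i_j>0$ by the reduction above, whence $N>0$ forces $b^{ij}>0$ as well; thus $i,j$ form a valid pair and $\alpha(M;i,j)\ge\beta(M;i,j)$, and applying this to a pair realising $\beta(M)$ yields $\beta(M)\le\alpha(M)$ whenever $\beta(M)>1$.

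The only step needing genuine care is the bookkeeping for the degenerate pairs — a coloop among $i,j$, a series pair $\{i,j\}$, or a parallel pair — which sit at the endpoints of the stated strict inequalities; but each of them simply pins $\alpha$ and $\beta$ to a common value $0$ or $1$ (or, for a series pair, to $\alpha=0$, $\beta\in(0,1)$), so no content is lost. The load-bearing facts are exactly the identity $b\cdot b_{ij}-b_i b_j=b^{ij}b_{ij}-b^j_i b^i_j$ and the strict inequality $b^j_i b^i_j<b_i b_j$ coming from $b_{ij}>0$; once the identity is in hand, the rest is a two-line sign argument with no real obstacle.
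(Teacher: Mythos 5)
Your proof is correct and takes essentially the same route as the paper: both rest on the four-way decomposition of the bases and on the fact that $\beta(M;i,j)-1$ and $\alpha(M;i,j)-1$ share the numerator $b^{ij}b_{ij}-b_i^j b_j^i$ over the positive denominators $b_i b_j > b_i^j b_j^i$, which is just a rearrangement of the paper's Equation~\eqref{eq:beta2alpha}. If anything, you treat the boundary cases more carefully than the paper does (coloops, and series pairs with $b^{ij}=0$, where $\alpha=0<\beta<1$ is a configuration the stated four alternatives technically omit), while the key ``in particular'' conclusion is established exactly as in the paper.
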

\begin{proof} 
	The correlation constant of $M$ in terms of $b^{ij}$, $b_i^j$, $b_j^i$ and $b_{ij}$ is
	\begin{align} \label{eq:beta2alpha}
		\beta(M;i,j) & = \frac{(b^{ij}+b^j_i+b^i_j+b_{ij})\cdot b_{ij}}{(b^j_i+b_{ij})\cdot(b_j^i+b_{ij})} = \frac{b^{ij}\cdot b_{ij} + (b^j_i+b^i_j+b_{ij})\cdot b_{ij} }{b^j_i\cdot b^i_j+(b^j_i+b^i_j+b_{ij})\cdot b_{ij} }\enspace .
	\end{align}
	Clearly $b_{ij}=0$ implies the equality $\alpha(M;i,j)=\beta(M;i,j)=0$.
	The expression $ \frac{(b^j_i+b^i_j+b_{ij})\cdot b_{ij}}{b_i^j\cdot b_j^i}$ is positive whenever $b_{ij}$ does not vanish and the above Equation~\eqref{eq:beta2alpha} is equivalent to
	\begin{align*}
		\frac{(b^j_i+b^i_j+b_{ij})\cdot b_{ij}}{b_i^j\cdot b_j^i} \cdot \left( 1-\beta(M;i,j) \right) = \beta(M;i,j)-\alpha(M;i,j) \enspace .
	\end{align*}
	This confirms that exactly one of the four claimed cases applies.
\end{proof}

We will now take a look at a large class of matroids.
A matroid~$S$ is called \emph{sparse paving} if and only if every $r$-subset of $[n]$ is either a basis or a circuit of $S$.  
It is conjectured that almost all matroids are sparse paving; see \cite[Conjecture 1.6]{MayhewNewmanWelshWhittle:2011},\cite[Conjecture 15.5.10]{Oxley:2011}. 

\begin{proposition} The $\alpha$-ratio of a sparse paving matroid $S$ satisfies $\alpha(S) \leq \beta(S) \leq 1$.
\end{proposition}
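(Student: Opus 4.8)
The plan is to use Theorem~\ref{thm:relation}, which reduces the claim to showing $\alpha(S;i,j) \le 1$ for every valid pair $i,j$ of a sparse paving matroid $S$; equivalently, using Definition~\ref{def:alpha}, that $b^{ij}\cdot b_{ij} \le b_i^j \cdot b_j^i$. Here each of the four quantities counts bases in a (repeated) deletion/contraction of $S$ at $i$ and $j$: writing $r$ for the rank and $n=|E|$, the quantity $b_{ij}$ counts bases of $S$ containing both $i$ and $j$, which are in bijection with the bases of $S/i/j$ that miss $i,j$; $b_i^j$ those containing $i$ not $j$; $b_j^i$ those containing $j$ not $i$; and $b^{ij}$ those containing neither. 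Partitioning the $r$-subsets of $[n]$ according to which of $i,j$ they contain, each class has a fixed known cardinality — $\binom{n-2}{r-2}$, $\binom{n-2}{r-1}$, $\binom{n-2}{r-1}$, $\binom{n-2}{r}$ respectively — and the corresponding basis count equals that cardinality minus the number of circuits of $S$ lying in that class (using the defining property of sparse paving: every $r$-set is a basis or a circuit).

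The key step is then to control, for each of the four classes, how many of the circuit-hyperplanes of $S$ fall into it. Let $\cC$ be the set of size-$r$ circuits of $S$ and for a pair of elements let $c_{ij}, c_i^j, c_j^i, c^{ij}$ be the number of members of $\cC$ containing $i,j$ / $i$ not $j$ / $j$ not $i$ / neither. Since $i,j$ are not parallel, no member of $\cC$ is contained in $\{i,j\}$, and a sparse paving matroid's circuits of size $r$ pairwise intersect in at most $r-2$ elements (two $r$-circuits sharing $r-1$ elements would violate the exchange-type structure / create a smaller dependent set), which is exactly the statement that the circuit-hyperplanes form a "clique" structure that is sparse. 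I would then estimate $b^{ij}\cdot b_{ij} = \bigl(\binom{n-2}{r}-c^{ij}\bigr)\bigl(\binom{n-2}{r-2}-c_{ij}\bigr)$ and $b_i^j b_j^i = \bigl(\binom{n-2}{r-1}-c_i^j\bigr)\bigl(\binom{n-2}{r-1}-c_j^i\bigr)$, and compare. The cleanest route is probably to first dispose of the uniform case ($\cC=\emptyset$), where the inequality $\binom{n-2}{r}\binom{n-2}{r-2}\le\binom{n-2}{r-1}^2$ is the classical log-concavity of binomial coefficients, and then show that introducing circuits only helps: each circuit removed from a class decreases one factor, and a careful bookkeeping argument (or an averaging/double-counting identity relating $\sum$ of the $c$'s over all pairs $i,j$ to $|\cC|$, $r$, $n$) shows the product $b^{ij}b_{ij}$ drops at least as fast as $b_i^j b_j^i$.

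The main obstacle I anticipate is precisely this last comparison: naively, subtracting circuit counts from both sides could in principle break the inequality if, say, many circuits fall into the "$i$ only" and "$j$ only" classes while few fall into the extreme classes. To handle this I would invoke the structural fact that in a sparse paving matroid every element lies in a predictable number of size-$r$ circuits and that the circuit-hyperplanes through a fixed non-parallel pair are tightly constrained by the pairwise-intersection bound of $r-2$; this should force $c_i^j + c_j^i$ and $c_{ij}, c^{ij}$ into a regime where the product inequality is stable. If a fully uniform bound proves delicate, an alternative is to reduce to the case where $S$ is obtained from a uniform matroid by relaxing a single set of circuit-hyperplanes and argue by a one-step perturbation, since $\alpha(S;i,j) \le 1$ is a closed condition and the general sparse paving matroid is a limit of such perturbations — but I expect the direct log-concavity-plus-double-counting argument to go through.
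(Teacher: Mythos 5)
Your reduction via Theorem~\ref{thm:relation} to showing $\alpha(S;i,j)\le 1$, i.e.\ $b^{ij}\cdot b_{ij}\le b_i^j\cdot b_j^i$, is the right first move, and your exact counts $b_{ij}=\binom{n-2}{r-2}-c_{ij}$, $b_i^j=\binom{n-2}{r-1}-c_i^j$, $b_j^i=\binom{n-2}{r-1}-c_j^i$, $b^{ij}=\binom{n-2}{r}-c^{ij}$ are correct, as is the structural fact that two $r$-element circuits of a sparse paving matroid meet in at most $r-2$ elements (this uses that they are circuit-\emph{hyperplanes}; your ``exchange-type structure'' justification is not a proof). But the step you defer --- that subtracting the circuit counts preserves the log-concavity inequality --- is the entire content of the proposition, and you offer only ``careful bookkeeping \ldots shows'' and ``this should force \ldots a regime where the product inequality is stable.'' Nothing in your write-up rules out the worst case $c_{ij}=c^{ij}=0$ with $c_i^j,c_j^i$ large, which is exactly the scenario you flag. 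Your fallback is also not viable: relaxing a circuit-hyperplane containing exactly one of $i,j$ increases $b_i^j$ (or $b_j^i$) and hence \emph{decreases} $\alpha(S;i,j)$, while relaxing one through both or neither increases it, so $\alpha$ is not monotone along a chain of relaxations joining $S$ to the uniform matroid, and there is no limit/closedness argument available --- the quantitative comparison cannot be avoided.

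The gap is repairable along your lines, but you must actually do the estimate. The pairwise-intersection bound gives the packing inequality $c_i^j\le\frac{1}{r-1}\binom{n-2}{r-2}$ (each $(r-2)$-subset of $E\setminus\{i,j\}$ lies in at most one circuit-hyperplane through $i$ avoiding $j$), and likewise for $c_j^i$; hence $b_i^j\, b_j^i\ge\bigl(\binom{n-2}{r-1}-\tfrac{1}{r-1}\binom{n-2}{r-2}\bigr)^2=\binom{n-2}{r-1}^2\bigl(\tfrac{n-r-1}{n-r}\bigr)^2$, while trivially $b^{ij}\,b_{ij}\le\binom{n-2}{r}\binom{n-2}{r-2}=\binom{n-2}{r-1}^2\tfrac{(r-1)(n-r-1)}{r(n-r)}$. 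Comparing the two right-hand sides, the desired inequality holds precisely when $2r\le n$, so you additionally need the case $2r>n$, which follows by duality ($\alpha(S;i,j)=\alpha(S^*;i,j)$ because $b_{ij}(S^*)=b^{ij}(S)$ and $b_i^j(S^*)=b_j^i(S)$, and the class of sparse paving matroids is closed under duality) --- a case split your sketch does not anticipate. For comparison, the paper takes a different and shorter route: it lower-bounds $b_i^j$ and $b_j^i$ by the known lower bound of \cite[Theorem 4.8]{MerinoETal:2012} on the number of bases of the sparse paving minors $S\setminus j/i$ and $S\setminus i/j$, upper-bounds $b^{ij}$ and $b_{ij}$ by the plain binomial coefficients, and makes the same $2r\le n$ versus $2r>n$ case distinction; your circuit-counting approach is self-contained once the packing estimate and the duality step are supplied, but as written the decisive inequality is asserted rather than proved.
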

\begin{proof}
	Let $S$ be a sparse paving $r$-matroid on $n$ elements.
	The deletion of an element and contraction of another leads to a sparse paving matroid of rank $r-1$ on $n-2$ elements. 
	Such a matroid has at least $\frac{n-r-1}{r-1} \tbinom{n-2}{r-2}$ bases if $2r\leq n$ and 
	at least $\frac{r-1}{n-r-1} \tbinom{n-2}{r}$ bases otherwise.
	This follows from \cite[Theorem 4.8]{MerinoETal:2012} and the fact that the class of sparse paving matroids is dually closed. These numbers give a lower bound on $b^j_i$ and $b^i_j$.
	Clearly, $b^{ij}$ is the number of bases of a $r$-matroid and $b_{ij}$ of a $(r-2)$-matroid on $n-2$ elements, hence these numbers are bounded from above by the corresponding binomial coefficients.
	Applying these bounds we get the following estimations.
	\begin{align*}
		\alpha(S) &\leq \left( \frac{r-1}{n-r-1} \right)^2 \frac{ \tbinom{n-2}{r}\tbinom{n-2}{r-2} }{\tbinom{n-2}{r-2}^2} 
		= \frac{ (r-1)\cdot (n-r)}{r\cdot(n-r-1)} \leq 1 \text{ if } 2r\leq n \text{ and }\\
		\alpha(S) &\leq \left( \frac{n-r-1}{r-1} \right)^2 \frac{ \tbinom{n-2}{r}\tbinom{n-2}{r-2} }{\tbinom{n-2}{r}^2} 
		= \frac{r \cdot (n-r-1) }{ (r-1)\cdot(n-r) } < 1 \text{ if } 2r > n .
	\end{align*}
	From $\alpha(S)\leq 1$ and Theorem~\ref{thm:relation} the claim follows.
\end{proof}


Now we want to construct a sequence of matroids $M_k$ with the property that $\beta(M_k)$ converges to $\alpha(M)$. 
Let $i$, $j$ be a valid pair of elements of the $r$-matroid $M$ on $n$ elements and $M_k$ the matroid that is obtained from $M$ by adding $k-1$ parallel copies of each element other than $i$ and $j$.   
The $r$-matroid $M_k$ consists of $k\cdot(n-2)+2$ elements, and the sequence of those matroids fulfills the desired property.

\begin{lemma}\label{lem:convergence}
	The sequence $\beta(M_k;i,j)$ converges monotonically to $\alpha(M;i,j) = \alpha(M_k;i,j)$.
\end{lemma}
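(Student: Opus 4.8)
The plan is to express every basis count of $M_k$ occurring in $\alpha(M_k;i,j)$ and $\beta(M_k;i,j)$ in terms of the corresponding counts of $M$, and then to read both assertions off a single rational function of $k$. Write $a=b^{ij}$, $p=b_i^{j}$, $q=b_j^{i}$, $c=b_{ij}$ for the four counts of $M$ attached to the pair $i,j$. Since $i,j$ is a valid pair, $i$ and $j$ are non-loops and non-parallel, so $\{i,j\}$ is independent and hence $c>0$; and since $i$ is a non-loop of $M\setminus j$ (and $j$ of $M\setminus i$), also $p=b_i(M\setminus j)>0$ and $q=b_j(M\setminus i)>0$, while $a\geq 0$.

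The first step is the bijection describing the bases of $M_k$. In $M_k$ the element $e\notin\{i,j\}$ has been replaced by a parallel class of size $k$; a set containing two members of one such class is dependent, and replacing a chosen member by another member of its class changes neither size nor (in)dependence, while $i$ and $j$ acquire no copies. Hence a basis of $M_k$ is the same datum as a basis $B$ of $M$ together with a choice of one copy for each element of $B\setminus\{i,j\}$, and such a basis of $M_k$ contains $i$, resp.\ $j$, exactly when $B$ does. A basis $B$ of $M$ with $|B\cap\{i,j\}|=s$ thus lifts to $k^{r-s}$ bases of $M_k$; summing over the bases of $M$ split according to $B\cap\{i,j\}$ yields
\[
 b(M_k)=k^{r-2}\bigl(c+k(p+q)+k^{2}a\bigr),\qquad b_{ij}(M_k)=k^{r-2}c,
\]
\[
 b_i(M_k)=k^{r-2}\bigl(c+kp\bigr),\qquad b_j(M_k)=k^{r-2}\bigl(c+kq\bigr),
\]
and therefore $b_i^{j}(M_k)=k^{r-1}p$, $b_j^{i}(M_k)=k^{r-1}q$, and $b^{ij}(M_k)=b(M_k)-b_i(M_k)-b_j^{i}(M_k)=k^{r}a$. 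In particular $b_i^{j}(M_k),b_j^{i}(M_k)>0$, so $\alpha(M_k;i,j)$ is defined.

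Now the two conclusions are short computations. Substituting the last three values into Definition~\ref{def:alpha}, all powers of $k$ cancel and $\alpha(M_k;i,j)=\dfrac{k^{r}a\cdot k^{r-2}c}{k^{r-1}p\cdot k^{r-1}q}=\dfrac{ac}{pq}=\alpha(M;i,j)$, which is the second claim. Substituting into Definition~\ref{def:beta} the common factor $k^{r-2}$ cancels, and after expanding $(c+kp)(c+kq)=c^{2}+c(p+q)k+pq\,k^{2}$ one sees that the numerator $c\bigl(c+k(p+q)+k^{2}a\bigr)=c^{2}+c(p+q)k+ac\,k^{2}$ differs from this denominator only in its $k^{2}$-coefficient, so
\[
 \beta(M_k;i,j)=1+\frac{(ac-pq)\,k^{2}}{c^{2}+c(p+q)k+pq\,k^{2}} .
\]
Finally I would study $g(k)=\dfrac{k^{2}}{c^{2}+c(p+q)k+pq\,k^{2}}=\dfrac{1}{pq+c(p+q)/k+c^{2}/k^{2}}$ for integers $k\geq 1$: since $c,p,q>0$, the denominator of the last form is strictly decreasing in $k$, so $g$ is strictly increasing with limit $1/(pq)$. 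As $\beta(M_k;i,j)=1+(ac-pq)\,g(k)$, the sequence is strictly increasing if $ac>pq$, constant (and $\equiv 1=\alpha$) if $ac=pq$, strictly decreasing if $ac<pq$, and in every case tends to $1+(ac-pq)/(pq)=ac/(pq)=\alpha(M;i,j)=\alpha(M_k;i,j)$; by Theorem~\ref{thm:relation} the three cases correspond to $\alpha(M;i,j)$ being $>1$, $=1$, $<1$, and the sequence starts at $\beta(M_1;i,j)=\beta(M;i,j)$. The only genuinely non-formal step is the basis bijection of the second paragraph; once the counts $b(M_k),b_i(M_k),b_j(M_k),b_{ij}(M_k)$ are in hand, everything afterwards is algebra together with the sign analysis of one rational function, so I expect that bijection to be the main point to get right.
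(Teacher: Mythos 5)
Your proposal is correct and follows essentially the same route as the paper: you derive the scaling relations $b_{ij}(M_k)=k^{r-2}b_{ij}$, $b_i^j(M_k)=k^{r-1}b_i^j$, $b_j^i(M_k)=k^{r-1}b_j^i$, $b^{ij}(M_k)=k^{r}b^{ij}$ from the parallel-extension bijection (which you justify a bit more carefully than the paper does), conclude $\alpha(M_k;i,j)=\alpha(M;i,j)$, and then analyze the resulting rational function of $k$. The only cosmetic difference is that you establish monotonicity by rewriting $\beta(M_k;i,j)=1+(b^{ij}b_{ij}-b_i^jb_j^i)\,g(k)$ with $g$ visibly increasing, whereas the paper reads the same sign information off the numerator of the derivative with respect to $k$.
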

\begin{proof}
	The numbers of bases of the deletions and contractions of the matroid $M_k$ satisfy the following equations due to the fact that we have $k$ choices to form a basis for every element in $M$ that is neither $i$ nor $j$.
	\begin{align*}
		b_{ij}(M_k) &= k^{r-2} \cdot b_{ij}(M), &
		b_{i}^j(M_k) &= k^{r-1} \cdot b_{i}^j(M)\\
		b_j^i(M_k) &= k^{r-1} \cdot b_j^i(M), &
		b^{ij}(M_k) &= k^{r} \cdot b^{ij}(M) \enspace .
	\end{align*}
	Hence $\alpha(M;i,j) = \alpha(M_k;i,j) $ and Equation~\eqref{eq:beta2alpha} turns into
	\[
		\beta(M_k;i,j) = \frac{ k^{2}\cdot (b^{ij}\cdot b_{ij}) + (k\cdot b^j_i+k\cdot b^i_j+b_{ij})\cdot b_{ij} }
		{k^{2}\cdot(b^j_i\cdot b^i_j)+(k\cdot b^j_i+k\cdot b^i_j+b_{ij})\cdot b_{ij} }
	\]
	 which converges clearly to $\alpha(M;i,j)$. The monotonicity can be read off from the numerator of the derivative with respect to $k$, which is $(b^{ij}\cdot b_{ij} - b_i^j\cdot b_j^i)\cdot ( k^2\cdot b_i^j+ k^2\cdot b_j^i+ 2k \cdot b_{ij})\cdot b_{ij}$.
\end{proof}

\section{Examples of matroids with positive correlations}
\noindent
Our aim in this section is to construct examples of (representable) matroids with a positive correlation.

Let $p$ be a prime number, $\FF_p$ the prime field of characteristic $p$ and $r\geq 2$ an integer.
	Consider the following vector configuration in $\FF_p^r$ given by the $2+p\cdot (r-1)$ vectors:
	\begin{align}\label{vectors}
	e_1,\; v=\sum_{\ell=2}^r e_\ell\; \text{ and } \;
	v_{k,\ell} = k\cdot e_1 + e_\ell \; \text{ for $1 < \ell \leq r$ and $0 \leq k < p$.}
	\end{align}
	Let $M_{r,p}$ denote the corresponding realizable $r$-matroid, with two special elements.
	The element $i$ that corresponds to the vector $e_1$, and the element $j$ that corresponds to $v$.

	To the best of the author's knowledge the following example is the only published example of a matroid with positive correlated elements.
\begin{example}\label{ex:SeymourWelsh} The matroid $M_{4,2}$ is the example given by Seymour and Welsh.
	Its correlation constant is $\beta(M_{4,2})= \frac{36}{35}$.
\end{example}

We now determine the numbers of bases $b_i^j$, $b_{ij}$, $b^{ij}$ and $b_j^i$ of all combinations of deletions and contractions of the two elements $i$ and $j$ in the matroid $M_{r,p}$.

The projection to the last $r-1$ coordinates corresponds to the contraction of $i$.
The obtained vector configuration consists of the all ones vector which is the projection of $v$, and $p$ copies of each of the $r-1$ standard vectors. Deleting the vector $v$ leads to $p$ choices of each standard vector.
Hence, $b_i^j = p^{r-1}$.

If we contract $j$, then in each basis exactly one of the $r-1$ standard vectors is not appearing and therefore $b_{ij} = (r-1)\cdot p^{r-2}$.

Note that for every index $\ell$ the three vectors $v_{k_1,\ell}$, $v_{k_2,\ell}$, $v_{k_3,\ell}$ are dependent.
Hence, an index maximally appears twice in a basis.
A consequence is that each basis that does not contain $e_1$ and $v$ consists of exactly one pair of vectors 
$v_{k_1,\ell}$, $v_{k_2,\ell}$ for an index $2 \leq \ell \leq r$ and values $0\leq k_1,k_2 < p$.
There are $r-1$ possibilities for the index $\ell$ and $\frac{p\cdot (p-1)}{2}$ choices for $k_1 \neq k_2$.
The vector $e_1$ lies in the span of $v_{k_1,\ell}$ and $v_{k_2,\ell}$ and with the arguments from before we get that we have $p^{r-2}$ choices for the other elements to form a basis.
We conclude that $b^{ij} = (r-1)\cdot\frac{(p-1)}{2}\cdot p^{r-1}$.

The last remaining case deals with the deletion of $i$ and contraction of $j$.
A set of vectors $\smallSetOf{v_{k_\ell,\ell}}{2\leq\ell\leq r}$ forms a basis with $v$ in the original vector configuration if and only if the sum $\sum_{\ell=2}^r k_\ell$ does not vanish. Clearly this sum depends on the characteristic.
In characteristic $p$ there are $p^{r-2}\cdot(p-1)$ of these bases.
There are further bases that contain a pair of vectors $v_{k_1,\ell}, v_{k_2,\ell}$ for an index $\ell$ and omit one of the other $r-2$ indices.  These are $(r-1)(r-2)\cdot \frac{p\cdot (p-1)}{2} p^{r-3}$ additional bases. In total this leads to $b_j^i= \frac{p^{r-2}\cdot(p-1)}{2} \cdot( 2 + (r-1)(r-2) )$.
We summarize our results.
\begin{lemma}
	Let $r\geq 4$ and $p$ be a prime number. The matroid $M_{r,p}$ has a positively correlated pair of elements. Its $\alpha$-ratio is
	$\alpha(M_{r,p}) = \frac{(r-1)^2}{2 + (r-1)\cdot(r-2)}$.
	This ratio is maximal $\frac{8}{7}$ for $r=5$.
\end{lemma}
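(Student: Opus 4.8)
The plan is to feed the four base counts established just above into Definition~\ref{def:alpha}, simplify, and then read off everything else.

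First I would note that the designated pair $(i,j)$ is valid: the vectors $e_1$ and $v=\sum_{\ell=2}^{r}e_\ell$ are distinct, nonzero, and not scalar multiples of one another, so $i$ and $j$ are neither loops nor parallel; moreover $b^{ij}=(r-1)\tfrac{p-1}{2}p^{r-1}>0$ for every prime $p$, so neither $i$ nor $j$ is a coloop. Hence $\alpha(M_{r,p};i,j)$ is defined. Substituting
\[
b_i^{\,j}=p^{r-1},\quad b_j^{\,i}=\tfrac{p^{r-2}(p-1)}{2}\bigl(2+(r-1)(r-2)\bigr),\quad b_{ij}=(r-1)p^{r-2},\quad b^{ij}=(r-1)\tfrac{p-1}{2}p^{r-1}
\]
into $\alpha(M_{r,p};i,j)=\dfrac{b^{ij}\,b_{ij}}{b_i^{\,j}\,b_j^{\,i}}$, the common factor $p^{2r-3}\cdot\tfrac{p-1}{2}$ cancels from numerator and denominator, leaving $\dfrac{(r-1)^2}{2+(r-1)(r-2)}$, which in particular does not depend on $p$.

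Next I would establish the positive correlation. The inequality $\frac{(r-1)^2}{2+(r-1)(r-2)}>1$ is equivalent to $(r-1)^2-(r-1)(r-2)>2$, i.e.\ to $r-1>2$, i.e.\ to $r\ge 4$. So for $r\ge 4$ we have $\alpha(M_{r,p};i,j)>1$, and Theorem~\ref{thm:relation} then forces the first of its four cases, namely $\alpha(M_{r,p};i,j)>\beta(M_{r,p};i,j)>1$; in particular $\beta(M_{r,p};i,j)>1$, so $(i,j)$ is a positively correlated pair. To identify $\alpha(M_{r,p})$ with $\alpha(M_{r,p};i,j)$ one still has to check that no other valid pair yields a larger $\alpha$-ratio. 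The other valid pairs fall into a handful of orbit types — two points on one of the $r-1$ lines through $e_1$, a point on such a line paired with $e_1$ or with $v$, and two points lying on different such lines — and for each type one computes the analogous base counts and verifies that the resulting ratio is $\le 1<\frac{(r-1)^2}{2+(r-1)(r-2)}$ (these pairs in fact turn out to be negatively correlated, so Theorem~\ref{thm:relation} again records the bookkeeping). This uniform treatment of the non-$(i,j)$ pairs is the step I expect to require the most care; the computation for $(i,j)$ itself is routine once the base counts are available.

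Finally I would optimize $f(r)=\frac{(r-1)^2}{2+(r-1)(r-2)}$ over integers $r\ge 4$. Putting $x=r-1\ge 3$ gives $f=\frac{x^2}{x^2-x+2}=\bigl(1-\tfrac{x-2}{x^2}\bigr)^{-1}$, so maximizing $f$ is the same as maximizing $g(x)=\frac{x-2}{x^2}$ over integers $x\ge 3$. Since $g'(x)=\frac{4-x}{x^3}$, the function $g$ is increasing on $[3,4]$ and decreasing afterwards, so its maximum over integers $x\ge 3$ is attained at $x=4$, where $g(4)=\tfrac18$ (and $g(3)=\tfrac19$ confirms this is the peak). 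Hence $\max_{r\ge 4}f(r)=f(5)=(1-\tfrac18)^{-1}=\tfrac87$, which is the last assertion of the lemma.
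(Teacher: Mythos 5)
Your proposal is correct and takes essentially the same route as the paper: substitute the four base counts into Definition~\ref{def:alpha} and cancel the common factor $\tfrac{p-1}{2}p^{2r-3}$ to get $\tfrac{(r-1)^2}{2+(r-1)(r-2)}$, observe this exceeds $1$ exactly when $r\geq 4$ so Theorem~\ref{thm:relation} yields a positively correlated pair, and maximize over $r$ to find the value $\tfrac{8}{7}$ at $r=5$. If anything you are more careful than the paper, which treats the pair $(i,j)$ only and leaves implicit the check that no other valid pair attains a larger ratio (a point you flag and sketch).
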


With arguments as above, the configuration in \eqref{vectors} embedded in the rational vector space $\QQ^r$ yields $b_{ij} = (r-1) p^{r-2}$, $b^{ij}=(r-1)\cdot\frac{p-1}{2} p^{r-1}$, $b_i^j=p^{r-1}$ and $b_j^i=p^{r-1}-1+(r-1)(r-2)\frac{p-1}{2} p^{r-2}$, as the sum $\sum_{\ell=2}^r k_l=0$ if and only if all the non negative summands vanish. 
Hence the $\alpha$-ratio of this matroid converges to $\frac{(r-1)^2}{2+(r-1)(r-2)}$ as $p\to\infty$.

The next theorem combines the presented results with Huh's and Wang's bound.
\begin{theorem}\label{thm:final}
	The following inequalities hold for any class $\cC$ of $r$-matroids that is closed under parallel extentions and contains a matroid with a positive correlation.
	\[
		1 < \sup_{M\in\cC}\alpha(M) = \sup_{M\in\cC}\beta(M) \leq 2\cdot\frac{r-1}{r} \enspace .
	\]
	In particular, the correlation constant $\beta_\KK$ of the field $\KK$ satisfies $\frac{8}{7} \leq \beta_\KK \leq 2$.
\end{theorem}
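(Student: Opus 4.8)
The plan is to establish the displayed chain for an arbitrary class $\cC$ satisfying the hypotheses, and then to instantiate it with the class of rank-$5$ matroids representable over $\KK$. The core is the identity $\sup_{M\in\cC}\alpha(M)=\sup_{M\in\cC}\beta(M)$, which I would prove in two directions. For $\sup_\cC\alpha\ge\sup_\cC\beta$, I argue one matroid at a time: if $M\in\cC$ has $\beta(M)>1$, then a pair $i,j$ realising $\beta(M)$ is necessarily a valid pair, because a parallel pair has $b_{ij}=0$ and hence $\beta(M;i,j)=0$, while a pair containing a coloop $i$ has $b_i=b$ and $b_{ij}=b_j$ and hence $\beta(M;i,j)=1$; since $\beta(M;i,j)=\beta(M)>1$, Theorem~\ref{thm:relation} forces its first alternative, so $\alpha(M)\ge\alpha(M;i,j)>\beta(M;i,j)=\beta(M)$. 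As $\cC$ contains a positively correlated matroid we have $\sup_\cC\beta>1$; approximating this supremum from below by matroids $M$ with $\beta(M)>1$ and using $\alpha(M)>\beta(M)$ for each of them yields $\sup_\cC\alpha\ge\sup_\cC\beta$.

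For the reverse inequality $\sup_\cC\beta\ge\sup_\cC\alpha$, I would use that $\cC$ is closed under parallel extensions: for any $M\in\cC$ and any valid pair $i,j$ of $M$, the matroids $M_k$ of Lemma~\ref{lem:convergence} all lie in $\cC$, and $\beta(M_k)\ge\beta(M_k;i,j)\to\alpha(M;i,j)$, so $\sup_\cC\beta\ge\alpha(M;i,j)$; taking the supremum over all valid pairs of $M$ and then over all $M\in\cC$ gives $\sup_\cC\beta\ge\sup_\cC\alpha$. Hence the two suprema coincide, and their common value is $>1$ by the previous paragraph. It remains to bound this value by $2\tfrac{r-1}{r}$. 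Every $M\in\cC$ has rank $r$, and by the bound of Huh and Wang recalled in the introduction the quantity $2(r-1)^2b_ib_jb_{ij}-r(r-1)bb_{ij}^2$ is positive for any pair of non-loops with $b_{ij}>0$, which rearranges to $\beta(M;i,j)<2\tfrac{r-1}{r}$; pairs with $b_{ij}=0$ give $\beta(M;i,j)=0$, and non-valid pairs give $\beta(M;i,j)\le1\le2\tfrac{r-1}{r}$. Thus $\beta(M)\le2\tfrac{r-1}{r}$ for every $M\in\cC$, so $\sup_\cC\beta\le2\tfrac{r-1}{r}$, completing the chain.

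To deduce the statement about fields I would take $\cC$ to be the class of rank-$5$ matroids representable over $\KK$; it is closed under parallel extensions because one may simply duplicate a vector. If $\operatorname{char}\KK=p>0$, then $M_{5,p}$ is $\KK$-representable, has a positively correlated pair, and $\alpha(M_{5,p})=\tfrac{(5-1)^2}{2+(5-1)(5-2)}=\tfrac87$; if $\operatorname{char}\KK=0$, then for $r=5$ the configuration \eqref{vectors} realised over $\QQ\subseteq\KK$ is positively correlated for $p$ large and its $\alpha$-ratio tends to $\tfrac87$ as $p\to\infty$. In either case $\sup_{M\in\cC}\alpha(M)\ge\tfrac87$, so by the chain above $\beta_\KK\ge\sup_{M\in\cC}\beta(M)=\sup_{M\in\cC}\alpha(M)\ge\tfrac87$, whereas $\beta_\KK\le2$ because every rank-$r$ $\KK$-representable matroid has $\beta(M)<2\tfrac{r-1}{r}<2$.

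The main obstacle I anticipate is the first step: one must verify that the pair attaining $\beta(M)$ is genuinely a valid pair whenever $\beta(M)>1$, so that Theorem~\ref{thm:relation} is applicable there, and one must carry suprema rather than maxima throughout — since the value $\tfrac87$ is attained in positive characteristic but only approached over $\QQ$, and the passage from $\alpha$ to $\beta$ via Lemma~\ref{lem:convergence} produces a strictly increasing sequence that never attains its limit. The remaining ingredients are either quoted from the introduction (the Huh–Wang positivity) or already proved above (Theorem~\ref{thm:relation}, Lemma~\ref{lem:convergence}, and the computation of $\alpha(M_{r,p})$).
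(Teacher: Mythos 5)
Your proposal is correct and follows essentially the argument the paper intends: the paper gives no separate proof of Theorem~\ref{thm:final} but presents it as the combination of Theorem~\ref{thm:relation}, Lemma~\ref{lem:convergence} (with closure under parallel extension), the computation of $\alpha(M_{r,p})$ and its rational analogue for $r=5$, and the Huh--Wang determinant bound, which is exactly the combination you carry out, with the added (and welcome) care of checking that a pair attaining $\beta(M)>1$ is a valid pair.
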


\smallskip
\noindent
{\bf Acknowledgements.} 
Research by B. Schr\"oter is carried out in the framework of Matheon supported by Einstein Foundation Berlin.
The author thanks Institut Mittag-Leffler for the hospitality and support during the program ``Tropical Geometry, Amoebas and Polytopes''.
Moreover, he thanks Kristin Shaw for her useful comments and June Huh for introducing this problem to him.

\bibliographystyle{alpha}
\bibliography{References}

\end{document}